\documentclass[10pt,twoside]{siamltex}
\usepackage{amsmath, amssymb,amsfonts}							
\usepackage{hyperref}
\usepackage{kbordermatrix}									
\usepackage{changepage}
\usepackage{enumitem}
\usepackage{multirow}

\allowdisplaybreaks

\newcommand{\bb}[1]{\mathbb{#1}}								
\newcommand{\mat}[2]{\textsl{M}_{#1}(#2)}						
\newcommand{\inv}[1]{#1^{-1}}								


\newcommand{\sr}[1]{\rho\left(#1\right)}							
\newcommand{\sig}[1]{\sigma \left( #1 \right)}						


\newcommand{\hyp}[2]{#1 \hyperref[#2]{\ref*{#2}}}					
\newcommand{\sulei}{Sule\u{\i}manova}

\setlength{\textheight}{190mm}
\setlength{\textwidth}{130mm}
\topmargin = 20mm


\setlength{\parskip}{.1in}



\newtheorem{example}[theorem]{Example}
\newtheorem{mydef}[theorem]{Definition}
\newtheorem{problem}[theorem]{Problem}

\newcommand{\reals}{\bb{R}}


\begin{document}



\title{Realizing Suleimanova Spectra via Permutative Matrices\thanks{}}

\author{
Pietro Paparella\thanks{Division of Engineering and Mathematics, University of Washington Bothell, Bothell, WA 98011-8246, USA (pietrop@uw.edu)}}

\pagestyle{myheadings}
\markboth{P.\ Paparella}{Realizing Suleimanova Spectra via Permutative Matrices}
\maketitle

\begin{abstract}
A \emph{permutative matrix} is a square matrix such that every row is a permutation of the first row. A constructive version of a result attributed to \sulei~is given via permutative matrices. In addition, we strengthen a well-known result by showing that all realizable spectra containing at most four elements can be realized by a permutative matrix or by a direct sum of permutative matrices. We conclude by posing a problem.
\end{abstract}

\begin{keywords}
Sule\u{\i}manova spectrum, permutative matrix, real nonnegative inverse eigenvalue problem.
\end{keywords}

\begin{AMS}
15A18 , 15A29, 15B99. 
\end{AMS}

\section{Introduction} \label{sec:intro}

Introduced by \sulei~in \cite{s1949}, the longstanding \emph{real nonnegative inverse eigenvalue problem} (RNIEP) is to determine necessary and sufficient conditions on a set $\sigma = \{ \lambda_1, \dots, \lambda_n \} \subset \bb{R}$ so that $\sigma$ is the spectrum an $n$-by-$n$ entrywise nonnegative matrix. 

If $A$ is an $n$-by-$n$, nonnegative matrix with spectrum $\sigma$, then $\sigma$ said to be \emph{realizable} and the matrix $A$ is called a \emph{realizing matrix} for $\sigma$. It is well-known that if $\sigma$ is realizable, then  
\begin{align}
s_k (\sigma) &:= \sum_{i=1}^n \lambda_i^k \geq 0,~\forall~k \in \bb{N} \label{cond1} \\
\sr{\sigma} &:= \max_{1\leq i \leq n} |\lambda_i| \in \sigma. \label{cond2}
\end{align}
For additional background and results, see, e.g., \cite{eln2004,m1988} and references therein.

A set $\sigma = \{ \lambda_1, \dots, \lambda_n \} \subset \bb{R}$ is called a \emph{\sulei~spectrum} if $s_1(\sigma) \geq 0$ and $\sigma$ contains exactly one positive element. \sulei~\cite{s1949} announced (and loosely proved) that every such spectrum is realizable. Fiedler \cite{f1974} showed that every Sule\u{\i}ma\-nova spectrum is \emph{symmetrically realizable} (i.e., realizable by a symmetric nonnegative matrix), however, his proof is by induction and does not explicitly yield a realizing matrix for all orders. In \cite{jp_pre}, Johnson and Paparella provide a constructive version of Fiedler's result for Hadamard orders.

Friedland \cite{f1978} and Perfect \cite{p1953} proved \sulei's result via companion matrices (for other proofs, see references in \cite{f1978}). In particular, the coefficients $c_0,c_1,\dots,c_{n-1}$ of the polynomial $p(t) := \prod_{k=1}^n (t - \lambda_k) = t^n + \sum_{k=0}^{n-1} c_k t^k$ are nonpositive so that the companion matrix of $p$ is nonnegative. As noted in \cite[p.~1380]{rsr2001}, the construction of the companion matrix of $p$ requires evaluating the elementary symmetric functions at $(\lambda_1,\lambda_2,\dots,\lambda_n)$, a computation with $\mathcal{O}(2^n)$ complexity. 

The computation of a realizing matrix for a realizable spectrum is of obvious interest for numerical purposes, but for many known theoretical results, a realizing matrix is not readily available. Indeed, according to Chu: 
\begin{adjustwidth}{2.5em}{0pt}
Very few of these theoretical results are ready for implementation to actually compute [the realizing] matrix. The most constructive result we have seen is the sufficient condition studied by Soules \cite{s1983}. But the condition there is still limited because the construction depends on the specification of the Perron vector -- in particular, the components of the Perron eigenvector need to satisfy certain inequalities in order for the construction to work. \cite[p.~18]{c1998}. 
\end{adjustwidth}

In this work, we provide a constructive version of Sule\u{\i}manova's result via \emph{permutative matrices}. The paper is organized as follows: \hyp{Section}{sec:not} contains notation and definitions; \hyp{Section}{sec:main} contains the main results; in \hyp{Section}{sect:connect_rniep} we show that if $\sigma = \{ \lambda_1,\dots, \lambda_n\}$ satisfies \eqref{cond1} and \eqref{cond2}, then $\sigma$ is realizable by a permutative matrix or by a direct sum of permutative matrices; and we conclude by posing a problem in \hyp{Section}{sec:conc}.  

\section{Notation}
\label{sec:not}

The set of $m$-by-$n$ matrices with entries from a field $\bb{F}$ (in this paper, $\bb{F}$ is either $\bb{C}$ or $\bb{R}$) is denoted by $\mat{m,n}{\bb{F}}$ (when $m = n$, $\mat{n,n}{\bb{F}}$ is abbreviated to $\mat{n}{\bb{F}}$). For $A = [a_{ij}] \in \mat{n}{\bb{C}}$, $\sig{A}$ denotes the \emph{spectrum} of $A$. 

The set of $n$-by-$1$ column vectors is identified with the set of all $n$-tuples with entries in $\bb{F}$ and thus denoted by $\bb{F}^m$. Given $x \in \bb{F}^n$, $x_i$ denotes the $i\textsuperscript{th}$ entry of $x$.

For the following, the size of each object will be clear from the context in which it appears:
\begin{itemize}
\item $I$ denotes the identity matrix;  
\item $e$ denotes the all-ones vector; and
\item $J$ denotes the all-ones matrix, i.e., $J = e e^\top$. 
\end{itemize}

\begin{mydef}
\label{def:perm}
{\rm For $x \in \bb{C}^n$ and permutation matrices $P_2,\dots,P_n \in \mat{n}{\bb{R}}$, a \emph{permutative matrix}\footnote{Terminolgy due to Charles R.~Johnson.} is any matrix of the form
\begin{equation*}
\begin{bmatrix}
x^\top          \\
(P_2 x)^\top    \\
\vdots          \\
(P_n x)^\top    \\
\end{bmatrix} \in \mat{n}{\bb{C}}.
\end{equation*}}
\end{mydef}
According to \hyp{Definition}{def:perm}, all one-by-one matrices are considered permutative.

\section{Main Results} 
\label{sec:main}

We begin with the following lemmas.

\begin{lemma} 
\label{lem:perm}
For $x \in \mathbb{C}^n$, let 
\begin{equation*}
P = P_x = 
\kbordermatrix{                         
  & 1 & 2 & \cdots & i & \cdots & n       \\
1 & x_1 & x_2 & \cdots & x_i & \cdots & x_n \\
2 & x_2 & x_1 & \cdots & x_i & \cdots & x_n \\
\vdots & \vdots & \vdots & \ddots & \vdots & & \vdots \\
i & x_i & x_2 & \cdots & x_1 & \cdots & x_n \\
\vdots & \vdots & \vdots &  & \vdots & \ddots & \vdots\\
n & x_n & x_2 & \cdots &x_i & \cdots & x_1
}
=
\begin{bmatrix}
x^\top                  \\
(P_{\alpha_2} x)^\top   \\
\vdots                  \\
(P_{\alpha_i} x)^\top   \\
\vdots                  \\
(P_{\alpha_n} x)^\top    \\
\end{bmatrix},
\end{equation*}
where $P_{\alpha_i}$ is the permutation matrix corresponding to the permutation $\alpha_i$ defined by $\alpha_i(x) = (1i)$, $i= 2,\dots,n$. Then $\sigma(P) = \{ s, \delta_2, \dots, \delta_n \}$,
where $s := \sum_{i=1}^n x_i$ and $\delta_i := x_1 - x_i$, $i=2,\dots,n$.
\end{lemma}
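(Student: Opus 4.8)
The plan is to compute the characteristic polynomial $\det(tI - P)$ directly by means of elementary row and column operations, each of which leaves the determinant unchanged, and to read off the eigenvalues from the resulting triangular form. As a preliminary sanity check, note that every row of $P$ has entry-sum $s$: in row $i \ge 2$ the entry $x_1$ appears in column $i$ and $x_i$ in column $1$, so the multiset of entries in that row is $\{x_1, \dots, x_n\}$, exactly as in row $1$. Hence $Pe = se$ and $s \in \sigma(P)$, with $e$ a (Perron) eigenvector; the remaining eigenvalues will fall out of the factorization of $\det(tI-P)$.

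First I would subtract the first row of $tI - P$ from each of rows $2, \dots, n$. The key structural fact is that, for $i \ge 2$ and $j \notin \{1,i\}$, rows $1$ and $i$ of $P$ agree in column $j$ (both equal $x_j$), so the new row $i$ is zero in every column except columns $1$ and $i$; a short computation shows the surviving entries are $\delta_i - t$ in column $1$ and $t - \delta_i$ in column $i$. Next I would add columns $2, \dots, n$ of the resulting matrix to its first column. For $i \ge 2$ this replaces the $(i,1)$ entry $\delta_i - t$ by $(\delta_i - t) + (t - \delta_i) = 0$, while the $(1,1)$ entry becomes $(t - x_1) - \sum_{j=2}^n x_j = t - s$.

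After these two steps the matrix is upper triangular: its first row is $(\,t - s,\ -x_2,\ \dots,\ -x_n\,)$ and, for $i \ge 2$, its $i$-th row has the single nonzero entry $t - \delta_i$ on the diagonal. Therefore $\det(tI - P) = (t - s)\prod_{i=2}^n (t - \delta_i)$, and $\sigma(P) = \{s, \delta_2, \dots, \delta_n\}$, as claimed. There is no genuine obstacle here beyond the bookkeeping; the only point requiring care is the verification that the off-diagonal entries of rows $2, \dots, n$ vanish after the row operation, which is precisely where the permutative (transposition) structure of $P$ is used.
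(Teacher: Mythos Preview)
Your argument is correct: the row subtractions and the column-sum operation are determinant-preserving, and your bookkeeping of the surviving entries is accurate, so you genuinely obtain $\det(tI-P)=(t-s)\prod_{i=2}^n(t-\delta_i)$.

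The paper takes a different route. Rather than reducing $tI-P$ to triangular form, it verifies each eigenvalue directly: it observes $Pe=se$, then for each $i\ge 2$ notes that rows $1$ and $i$ of $P-\delta_i I$ coincide (so $\delta_i\in\sigma(P)$), and finally exhibits an explicit eigenvector $v_i$ for $\delta_i$. Your determinant computation has the advantage of yielding the full characteristic polynomial in one stroke, so multiplicities are handled automatically without needing to argue that $e,v_2,\dots,v_n$ are linearly independent. The paper's approach, on the other hand, produces the eigenvectors explicitly, which can be useful downstream (e.g.\ for diagonalization or for perturbation arguments) and makes the permutative structure visible at the level of eigenspaces rather than just the spectrum.
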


\begin{proof}
Since every row sum of $P$ is $s$, it follows that $Pe = se$, i.e., $s \in \sigma(P)$.

Since
\begin{equation*}
P - \delta_i I = 
\kbordermatrix{                         
  & 1 & 2 & \cdots & i & \cdots & n       \\
1 & x_i & x_2 & \cdots & x_i & \cdots & x_n \\
2 & x_2 & x_i & \cdots & x_i & \cdots & x_n \\
\vdots & \vdots & \vdots & \ddots & \vdots & & \vdots \\
i & x_i & x_2 & \cdots & x_i & \cdots & x_n \\
\vdots & \vdots & \vdots & & \vdots & \ddots & \vdots\\
n & x_n & x_2 & \cdots &x_i & \cdots & x_i
},
\end{equation*}
it follows that the homogeneous linear system $(P - \delta_i I) \hat{x} = 0$ has a nontrivial solution (notice that the first and $i\textsuperscript{th}$ rows of $P- \delta_iI$ are identical). Thus, $\delta_i \in \sigma(P)$.

Moreover, if 
\begin{equation*}
v_i := 
\kbordermatrix{
                \\
1 & x_i         \\
\vdots & \vdots \\
i-1 & x_i       \\
i & x_1 - s     \\
i+1 & x_i       \\
\vdots & \vdots \\
n & x_i},~i=2,\dots,n
\end{equation*}
then 
\begin{equation*}
Pv_i = 
\kbordermatrix{                 \\
1 & x_i (s - x_i) + x_i (x_1 -s)    \\
\vdots & \vdots                          \\
i-1 & x_i (s - x_i) + x_i (x_1 -s)    \\
i & x_i (s-x_1) + x_1(x_1 - s)        \\
i+1 & x_i (s - x_i) + x_i (x_1 -s)    \\
\vdots & \vdots                          \\
n & x_i (s - x_i) + x_i (x_1 -s)}
=
(x_1 - x_i)
\begin{bmatrix}
x_i     \\
\vdots  \\
x_i     \\
x_1 - s \\
x_i     \\
\vdots  \\
x_i     \\
\end{bmatrix}
= \delta_i v_i,
\end{equation*}
so that $(\delta_i,v_i)$ is a right-eigenpair for $P$.
\end{proof}

\begin{lemma} 
\label{lem:papmat}
If
\begin{equation*}
M = M_n :=
\begin{bmatrix}
1 & e^\top \\
e & -I 
\end{bmatrix} \in \mat{n}{\reals},~n\geq2,
\end{equation*}
then
\begin{equation*}
\inv{M}= \inv{M_n} =
\frac{1}{n}
\begin{bmatrix}
1 & e^\top \\
e & J-nI
\end{bmatrix}.
\end{equation*}
\end{lemma}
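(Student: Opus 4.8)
The plan is to verify the claimed identity directly by block multiplication. Since $M_n$ is square, a one-sided inverse is automatically two-sided, so it suffices to check that $M_n$ times the displayed matrix equals $I_n$.

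First I would fix the conformal partition of $M_n$ into blocks of sizes $1$ and $n-1$, so that inside the lower-right block the symbols $e$, $I$, and $J$ denote objects of order $n-1$; in particular $e^\top e = n-1$ and $J = e e^\top$, whence $e^\top J = (n-1)e^\top$. I would then compute the four blocks of the product
\[
\begin{bmatrix} 1 & e^\top \\ e & -I \end{bmatrix}
\begin{bmatrix} 1 & e^\top \\ e & J - nI \end{bmatrix}.
\]
The $(1,1)$ block is $1 + e^\top e = n$; the $(1,2)$ block is $e^\top + e^\top(J - nI) = e^\top + (n-1)e^\top - n e^\top = 0$; the $(2,1)$ block is $e - e = 0$; and the $(2,2)$ block is $e e^\top - (J - nI) = J - J + nI = nI$. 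Hence the product is $n I_n$, and multiplying by $\tfrac{1}{n}$ yields $I_n$, establishing the formula.

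I do not anticipate a genuine obstacle here; the only point requiring care is the bookkeeping of the block dimensions — in particular that $e^\top e$ contributes the scalar $n-1$ (not $n$) to the $(1,1)$ entry, and that $J$ in the lower-right block has order $n-1$. An alternative derivation via the Schur complement of the $(1,1)$ entry of $M_n$, namely $-I - J$, whose inverse is $-I + \tfrac{1}{n}J$ by the Sherman--Morrison formula, would also produce the result, but the direct verification above is shorter and self-contained.
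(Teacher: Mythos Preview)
Your proof is correct and follows essentially the same approach as the paper: direct verification by block multiplication that the product equals $nI_n$. Your version is slightly more detailed in tracking the block dimensions and computing all four blocks explicitly, but the argument is the same.
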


\begin{proof}
Clearly,
\begin{equation*}
n M\inv{M} 
=
\begin{bmatrix}
1 & e^\top \\
e & -I 
\end{bmatrix}
\cdot
\begin{bmatrix}
1 & e^\top \\
e & J-nI
\end{bmatrix}           
=
\begin{bmatrix}
n & e^\top + e^\top(J-nI) \\
0 & nI
\end{bmatrix},             
\end{equation*}
but $e^\top + e^\top(J-nI) = e^\top + (n-1) e^\top- ne^\top = 0$;
dividing through by $n$ establishes the result.
\end{proof}

\begin{theorem}[Sule\u{\i}manova]
\label{thm:sulei}
Every Sule\u{\i}manova spectrum is realizable. 
\end{theorem}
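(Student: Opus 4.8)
The plan is to combine the two lemmas into an explicit realizing matrix. Write a \sulei~spectrum as $\sigma = \{\lambda_1,\lambda_2,\dots,\lambda_n\}$ with $\lambda_1 > 0 \geq \lambda_2 \geq \cdots \geq \lambda_n$ and $s_1(\sigma) = \sum_i \lambda_i \geq 0$. The key observation is that \hyp{Lemma}{lem:perm} produces, from any $x \in \complex^n$, a permutative matrix $P_x$ whose spectrum is $\{s, x_1-x_2,\dots,x_1-x_n\}$ where $s = \sum_i x_i$. So I want to choose $x$ so that $\{s, x_1-x_2, \dots, x_1-x_n\} = \sigma$. Setting $\delta_i := x_1 - x_i$, I need $\delta_i = \lambda_i$ for $i = 2,\dots,n$ and $s = \lambda_1$. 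This is a linear system in $x$: writing it out, $x_1$ is a free-ish parameter and $x_i = x_1 - \lambda_i$ for $i \geq 2$, while the constraint $\sum_i x_i = \lambda_1$ reads $n x_1 - \sum_{i=2}^n \lambda_i = \lambda_1$, i.e. $n x_1 = \sum_{i=1}^n \lambda_i = s_1(\sigma)$, so $x_1 = s_1(\sigma)/n \geq 0$. This is exactly where \hyp{Lemma}{lem:papmat} enters: the matrix $M_n$ (or its transpose) encodes precisely this solve, and $\inv{M_n}$ gives the closed form $x_1 = s_1(\sigma)/n$, $x_i = s_1(\sigma)/n - \lambda_i$.

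With $x$ so defined, I then check that $P_x$ is entrywise nonnegative. Its entries are the entries of $x$: namely $x_1 = s_1(\sigma)/n \geq 0$ since $s_1(\sigma) \geq 0$, and for $i \geq 2$, $x_i = s_1(\sigma)/n - \lambda_i \geq 0$ because $s_1(\sigma) \geq 0$ and $\lambda_i \leq 0$. Hence every entry of $P_x$ is nonnegative, $P_x$ is a nonnegative permutative matrix, and by \hyp{Lemma}{lem:perm} its spectrum is $\{s, \delta_2,\dots,\delta_n\} = \{\lambda_1,\lambda_2,\dots,\lambda_n\} = \sigma$. Therefore $\sigma$ is realizable, and in fact realizable by a permutative matrix.

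I expect no serious obstacle here: the only subtlety is bookkeeping — matching the ordering/labeling of $\sigma$ with the roles of $s$ and the $\delta_i$ in \hyp{Lemma}{lem:perm}, and making sure the "exactly one positive element" hypothesis is used in the right place (it guarantees $\lambda_i \leq 0$ for $i \geq 2$, which is what makes $x_i \geq 0$). One should also note the degenerate case $n = 1$, where $\sigma = \{\lambda_1\}$ with $\lambda_1 \geq 0$ is trivially realized by the $1$-by-$1$ matrix $[\lambda_1]$, consistent with the convention that $1$-by-$1$ matrices are permutative. The proof is genuinely constructive: $x = \inv{M_n}\,(s_1(\sigma),\lambda_2,\dots,\lambda_n)^\top$ (up to sign conventions on the $\lambda_i$) can be computed in $\mathcal{O}(n)$ operations, avoiding the $\mathcal{O}(2^n)$ elementary-symmetric-function computation of the companion-matrix approach, which is the point emphasized in the introduction.
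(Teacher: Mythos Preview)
Your proposal is correct and follows essentially the same route as the paper: set up the linear system $M_n x = \lambda$ so that $P_x$ has spectrum $\sigma$ by \hyp{Lemma}{lem:perm}, solve via \hyp{Lemma}{lem:papmat} to get $x_1 = s_1(\sigma)/n$ and $x_i = s_1(\sigma)/n - \lambda_i$ for $i \geq 2$, and observe that nonnegativity of $x$ follows from $s_1(\sigma) \geq 0$ and $\lambda_i \leq 0$. One tiny slip: in your closing parenthetical the right-hand side should be $(\lambda_1,\lambda_2,\dots,\lambda_n)^\top$, not $(s_1(\sigma),\lambda_2,\dots,\lambda_n)^\top$; the $s_1(\sigma)$ appears only after applying $\inv{M_n}$.
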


\begin{proof}
Let $\sigma = \{ \lambda_1,\dots, \lambda_n \}$ be a Sule\u{\i}manova spectrum and assume, without loss of generality, that $\lambda_1 \geq 0 \geq \lambda_2 \geq \cdots \geq \lambda_n$. If $\lambda := [\lambda_1~\lambda_2~\cdots~\lambda_n]^\top \in \reals^n$, then, following \hyp{Lemma}{lem:papmat}, the solution $x$ of the linear system
\begin{equation*}
\left\{
\begin{array}{*{9}{c}}
 x_1 & + & x_2 & + & \cdots & + & x_n & = & \lambda_1       \\
 x_1 & - & x_2 & & & &               & = & \lambda_2   \\
    & & & & & & & \vdots                            \\
 x_1 & & & & & - & x_n & = & \lambda_n
\end{array} \right.
\end{equation*}
is given by
\begin{equation*}
x
=\inv{M}\lambda  
=\frac{1}{n}
\begin{bmatrix}
s_1(\sigma)                \\
s_1(\sigma) - n \lambda_2  \\
\vdots                      \\
s_1(\sigma) - n\lambda_n     
\end{bmatrix}.
\end{equation*}
which is clearly nonnegative. Following \hyp{Lemma}{lem:perm}, the nonnegative matrix $P_x$ realizes $\sigma$.
\end{proof}

\begin{example}
{\rm If $\sigma = \{10,-1,-2,-3\}$, then $\sigma$ is realizable by  
\begin{equation*}
\begin{bmatrix}
1 & 2 & 3 & 4   \\
2 & 1 & 3 & 4   \\
3 & 2 & 1 & 4   \\
4 & 2 & 3 & 1
\end{bmatrix}.
\end{equation*}}
\end{example}

\begin{corollary}
If $\sigma = \{ \lambda_1,-\lambda_2,\dots, -\lambda_n \}$ is a Sule\u{\i}manova spectrum such that $s_1(\sigma) = 0$ and $\lambda_1 > 0$, then the $n$-by-$n$ nonnegative matrix
\begin{equation*}
P := 
\begin{bmatrix}                         
0 & \lambda_2 & \cdots & \lambda_i & \cdots & \lambda_n \\
\lambda_2 & 0 & \cdots & \lambda_i & \cdots & \lambda_n \\
\vdots & \vdots & \ddots & \vdots & & \vdots \\
\lambda_i & \lambda_2 & \cdots & 0 & \cdots & \lambda_n \\
\vdots & \vdots &  & \vdots & \ddots & \vdots\\
\lambda_n & \lambda_2 & \cdots &\lambda_i & \cdots & 0
\end{bmatrix}
\end{equation*}
realizes $\sigma$.
\end{corollary}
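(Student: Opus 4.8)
The plan is to recognize $P$ as an instance of the matrix $P_x$ from \hyp{Lemma}{lem:perm}. First I would set $x := [\,0~~\lambda_2~~\cdots~~\lambda_n\,]^\top \in \reals^n$. Reading off the displayed form of $P_x$ in \hyp{Lemma}{lem:perm}, its first row is $x^\top$ and, for $i = 2,\dots,n$, its $i\textsuperscript{th}$ row is $x^\top$ with the first and $i\textsuperscript{th}$ entries transposed. These coincide entry-for-entry with the rows of the matrix $P$ as displayed in the statement: the diagonal entry $0$ in row $i$ is $x_1 = 0$, the leading entry $\lambda_i$ in row $i$ is $x_i$, and every remaining entry is unchanged. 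Hence $P = P_x$.

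Next I would apply \hyp{Lemma}{lem:perm} directly to conclude that $\sigma(P) = \sigma(P_x) = \{\, s, \delta_2, \dots, \delta_n \,\}$, where $s = \sum_{i=1}^n x_i = \lambda_2 + \cdots + \lambda_n$ and $\delta_i = x_1 - x_i = -\lambda_i$ for $i = 2,\dots,n$. The hypothesis $s_1(\sigma) = 0$ is precisely the identity $\lambda_1 - (\lambda_2 + \cdots + \lambda_n) = 0$, so $s = \lambda_1$, and therefore $\sigma(P) = \{\, \lambda_1, -\lambda_2, \dots, -\lambda_n \,\} = \sigma$.

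It then remains to verify that $P$ is (entrywise) nonnegative. Since $\sigma$ is a \sulei~spectrum whose unique positive element is $\lambda_1 > 0$, each of $-\lambda_2, \dots, -\lambda_n$ is nonpositive, i.e.\ $\lambda_i \geq 0$ for $i = 2,\dots,n$; hence $x \geq 0$, and since every entry of $P = P_x$ is an entry of $x$, the matrix $P$ is nonnegative. In truth I do not expect any real obstacle here: the content of the corollary is just the observation that imposing $s_1(\sigma) = 0$ on the construction underlying \hyp{Theorem}{thm:sulei} forces the leading coordinate of the permutative realizer to vanish, which produces the transparent closed form for $P$; once $P$ is identified with $P_x$ for the $x$ above, the spectral claim is immediate from \hyp{Lemma}{lem:perm} and nonnegativity is immediate from the sign pattern of a \sulei~spectrum.
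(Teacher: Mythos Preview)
Your proof is correct and follows the paper's intended route: the corollary is stated without proof precisely because it is the specialization of the construction in \hyp{Theorem}{thm:sulei} to the case $s_1(\sigma)=0$, which forces $x_1=0$ and $x_i=\lambda_i$ for $i\geq 2$, exactly the vector you write down. Your direct appeal to \hyp{Lemma}{lem:perm} for the spectrum and to the sign pattern of a \sulei~spectrum for nonnegativity is the same argument in slightly different packaging.
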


\begin{example}
{\rm If $\sigma = \{6,-1,-2,-3\}$, then $\sigma$ is realizable by  
\begin{equation*}
\begin{bmatrix}
0 & 1 & 2 & 3   \\
1 & 0 & 2 & 3   \\
2 & 1 & 0 & 3   \\
3 & 1 & 2 & 0
\end{bmatrix}.
\end{equation*}}
\end{example}

\section{Connection to the RNIEP}
\label{sect:connect_rniep}


It is well-known that for $1 \leq n \leq 4$, conditions \eqref{cond1} and \eqref{cond2} are also sufficient for realizability (see, e.g., \cite{jp_pre, ll1978-79}). In this section, we strengthen this result by demonstrating that the realizing matrix can be taken to be permutative or a direct sum of permutative matrices.

\begin{theorem}
If $\sigma = \{ \lambda_1, \dots, \lambda_n \} \subset \bb{R}$ and $1 \leq n \leq 4$, then $\sigma$ is realizable if and only if $\sigma$ satisfies \eqref{cond1} and \eqref{cond2}. Futhermore, the realizing matrix can be taken to be permutative or a direct sum of permutative matrices.
\end{theorem}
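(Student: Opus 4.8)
The plan is to prove the forward direction of the equivalence by handling the cases $n \le 3$ quickly and then concentrating on $n = 4$, where the work lies. For $n = 1$ a $1$-by-$1$ nonnegative matrix is trivially permutative. For $n = 2$, conditions \eqref{cond1} and \eqref{cond2} force $\lambda_1 \ge |\lambda_2|$, so $\sigma$ is a \sulei~spectrum (either $\lambda_2 \le 0$, or $\lambda_2 \ge 0$ in which case $\mathrm{Diag}(\lambda_1,\lambda_2)$ works and a diagonal matrix is a direct sum of $1$-by-$1$ permutative matrices). For $n = 3$, either all of $\lambda_2, \lambda_3$ are nonnegative, giving a diagonal realizing matrix, or at least one is negative; if exactly one is negative we have a \sulei~spectrum and \hyp{Theorem}{thm:sulei} applies via $P_x$, and if both are negative it is again a \sulei~spectrum. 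So in every case $n \le 3$ we land on \hyp{Theorem}{thm:sulei} or on a diagonal matrix.

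For $n = 4$, order $\lambda_1 \ge \lambda_2 \ge \lambda_3 \ge \lambda_4$; by \eqref{cond2}, $\lambda_1 = \sr{\sigma} \ge 0$. I would split on the sign of $\lambda_3$ (hence of $\lambda_3,\lambda_4$). If $\lambda_2, \lambda_3, \lambda_4 \le 0$, then $\sigma$ is a \sulei~spectrum (when $s_1(\sigma) \ge 0$, which is \eqref{cond1} for $k=1$) and \hyp{Theorem}{thm:sulei} produces a permutative realizing matrix $P_x$. If $\lambda_2 \ge 0 \ge \lambda_3 \ge \lambda_4$, I would write $\sigma = \{\lambda_1, \lambda_3, \lambda_4\} \cup \{\lambda_2\}$: the three-element set $\{\lambda_1, \lambda_3, \lambda_4\}$ is a \sulei~spectrum provided $\lambda_1 + \lambda_3 + \lambda_4 \ge 0$, and then $P_x \oplus [\lambda_2]$ is a direct sum of permutative matrices realizing $\sigma$; if instead $\lambda_1 + \lambda_3 + \lambda_4 < 0$, then the slack must be absorbed by $\lambda_2$, i.e. $\lambda_2 \ge -( \lambda_1 + \lambda_3 + \lambda_4) > 0$, and I would instead peel off the pair $\{\lambda_1 + \lambda_3 + \lambda_4 + \lambda_2,\; 0\}$-type decomposition — more carefully, split $\sigma$ into two \sulei~pairs $\{a, \lambda_3\}$ and $\{b, \lambda_4\}$ with $a + b = \lambda_1 + \lambda_2$, $a \ge -\lambda_3$, $b \ge -\lambda_4$, and $a, b \ge 0$; such $a, b$ exist because $\lambda_1 + \lambda_2 \ge \lambda_1 \ge -\lambda_4 \ge -\lambda_3 - \lambda_4$ using \eqref{cond1} with $k = 2$ to guarantee $\lambda_1^2 \ge \lambda_3^2 + \lambda_4^2 - \lambda_2^2$ when needed. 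Each pair is realized by a $2$-by-$2$ permutative matrix (a \sulei~spectrum of size $2$), so their direct sum is a direct sum of permutative matrices realizing $\sigma$. Finally, if $\lambda_3 \ge 0$, all four eigenvalues are nonnegative and $\mathrm{Diag}(\lambda_1, \lambda_2, \lambda_3, \lambda_4)$ is a direct sum of $1$-by-$1$ permutative matrices.

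The necessity direction — that \eqref{cond1} and \eqref{cond2} are necessary for any realizable $\sigma$ — is already recorded in the introduction and needs no reargument here; likewise the sufficiency of \eqref{cond1} and \eqref{cond2} for $n \le 4$ is classical (\cite{ll1978-79}), so the genuine content is the \emph{permutative} refinement, and the proof above should be presented as a case analysis that in each case either reduces to \hyp{Theorem}{thm:sulei}, to a \sulei~pair, or to a diagonal matrix.

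The main obstacle I anticipate is the sub-case $\lambda_2 > 0 \ge \lambda_3 \ge \lambda_4$ with $\lambda_1 + \lambda_3 + \lambda_4 < 0$: one must verify that the two positive weights $a, b$ can be chosen so that both pairs $\{a,\lambda_3\}$, $\{b,\lambda_4\}$ are \sulei~(equivalently $a \ge -\lambda_3$ and $b \ge -\lambda_4$) while $a + b = \lambda_1 + \lambda_2$, and the only tool available for this is the trace condition \eqref{cond1} at $k = 1$ together with the power-sum condition at $k = 2$; pinning down exactly which inequality among the $s_k(\sigma) \ge 0$ forces the pairing to exist is the delicate step, and I would double-check it against the extremal spectrum where $\lambda_1 = \lambda_2$ and $\lambda_3 = \lambda_4$.
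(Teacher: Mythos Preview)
Your proposal has a genuine gap in the $n=4$ sub-case $\lambda_1 \ge \lambda_2 > 0 \ge \lambda_3 \ge \lambda_4$ with $\lambda_1 + \lambda_3 + \lambda_4 < 0$. Choosing fresh $a,b$ with $a+b = \lambda_1 + \lambda_2$ and forming the direct sum of realizations of $\{a,\lambda_3\}$ and $\{b,\lambda_4\}$ does \emph{not} realize $\sigma$: the spectrum of a direct sum is the union of the block spectra, so the resulting matrix has spectrum $\{a,b,\lambda_3,\lambda_4\}$, which equals $\sigma$ only when $\{a,b\} = \{\lambda_1,\lambda_2\}$ as a multiset. Matching traces is not enough. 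Once $a,b$ are forced to be $\lambda_1,\lambda_2$, the only $2+2$ splittings are $\{\lambda_1,\lambda_4\}\cup\{\lambda_2,\lambda_3\}$ and $\{\lambda_1,\lambda_3\}\cup\{\lambda_2,\lambda_4\}$, and both can fail: for $\sigma = \{1,\ \tfrac{1}{10},\ -\tfrac{1}{2},\ -\tfrac{3}{5}\}$ one has $s_1(\sigma)=0$ and $\lambda_1 + \lambda_3 + \lambda_4 < 0$, yet $\lambda_2 + \lambda_3 < 0$ and $\lambda_2 + \lambda_4 < 0$, so neither pair containing $\lambda_2$ is realizable (and no $3+1$ or $2+1+1$ split works either). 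Your extremal test $\lambda_1=\lambda_2$, $\lambda_3=\lambda_4$ happens to admit the obvious pairing, which is why it did not detect the problem.

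The paper, following \cite{jp_pre}, covers $n=4$ with two templates: the direct sum of two $2$-by-$2$ permutative blocks, and the single $4$-by-$4$ permutative matrix
\[
\begin{bmatrix}
a & b & c & d \\
b & a & d & c \\
c & d & a & b \\
d & c & b & a
\end{bmatrix},
\qquad \text{with spectrum } \{a+b+c+d,\ a+b-c-d,\ a-b+c-d,\ a-b-c+d\}.
\]
It is this second, $\bb{Z}_2\times\bb{Z}_2$-type template that handles the spectra your pairing argument misses; for the example above one solves to get $(a,b,c,d) = (0,\ \tfrac{11}{20},\ \tfrac{1}{4},\ \tfrac{1}{5}) \ge 0$. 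Two smaller slips to fix as well: for $n=3$, ``exactly one negative'' with $\lambda_2>0$ is \emph{not} a \sulei\ spectrum (there are two positive elements) --- use the pair $\{\lambda_1,\lambda_3\}$ direct-summed with $[\lambda_2]$ instead, as the paper does; and for $n=4$ your split on the sign of $\lambda_3$ omits the easy case $\lambda_3 \ge 0 > \lambda_4$.
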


\begin{proof}
Without loss of generality, assume that $\sr{\sigma} = 1$.

The case when $n=1$ is trivial, but it is worth mentioning that $\sigma = \{1\}$ is realized by the permutative matrix $[1]$.

If $\sigma = \{ 1, \lambda \}$, $-1 \leq \lambda \leq 1$, then the permutative matrix
\begin{equation*}
\frac{1}{2}
\begin{bmatrix}
1 + \lambda & 1 - \lambda \\
1 - \lambda & 1 + \lambda
\end{bmatrix}
\end{equation*} 
realizes $\sigma$.

As established in \cite{jp_pre}, if $\sigma = \{1, \mu, \lambda \}$, where $-1 \leq \mu, \lambda \leq 1$, then the matrix 
\begin{equation*}
\begin{bmatrix}
(1 + \lambda)/2 & (1 - \lambda)/2 & 0 \\
(1 - \lambda)/2 & (1 + \lambda)/2 & 0 \\
0 & 0 & \mu
\end{bmatrix}
\end{equation*}
realizes $\sigma$ when $1 \geq \mu \geq \lambda \geq 0$ or $1 \geq \mu \geq 0 > \lambda$. Notice that this matrix is a direct sum of permutative matrices. If $0 > \mu \geq \lambda$, then, following \hyp{Theorem}{thm:sulei}, $\sigma$ is realizable by a permutative matrix. 

When $n=4$, all realizable spectra can be realized by matrices of the form
\begin{equation*}
\begin{bmatrix}
a + b & a - b & 0 & 0 \\
a - b & a + b  & 0 & 0 \\
0 & 0 & c + d & c - d \\
0 & 0 & c - d & c + d
\end{bmatrix} \mbox{ or }
\begin{bmatrix}
a & b & c & d \\
b & a & d & c \\
c & d & a & b \\
d & c & b & a
\end{bmatrix}
\end{equation*}
(for full details, see \cite[pp.~10--11]{jp_pre}). 
\end{proof}

\section{Concluding Remarks}
\label{sec:conc}

In \cite{f1978}, Fiedler posed the \emph{symmetric nonnegative inverse eigenvalue problem} (SNIEP), which requires the realizing matrix to be symmetric. Obviously, if $\sigma =\{ \lambda_1, \dots, \lambda_n \}$ is a solution to the SNIEP, then it is a solution to the RNIEP. In \cite{jll1996}, Johnson, Laffey, and Loewy that showed that the RNIEP strictly contains the SNIEP when $n\geq5$. It is in the spirit of this problem that we pose the following. 

\begin{problem}
\label{prob:conc}
Can all realizable real spectra be realized by a permutative matrix or by a direct sum of permutative matrices?
\end{problem}

At this point there is no evidence that suggests an affirmative answer to \hyp{Problem}{prob:conc}; however, a negative answer could be just as difficult. One possibility to establish a negative answer, communicated to me by R.~Loewy, is to find an \emph{extreme nonnegative matrix} \cite{l1998} with a real spectrum that can not be realized by a permutative matrix, or a direct sum of permutative matrices.  



\section*{Acknowledgment} I wish to thank the anonymous referee and Raphael Loewy for their comments on improving the first-draft.  

\bibliographystyle{siam}

\end{document}